\newtheorem{theorem}{Theorem}[section]
\newtheorem{prop}[theorem]{Proposition}
\newcommand{\Real}{\mathbb R}
\newcommand{\A}{\mathcal{A}}
\newcommand{\punt}{\bolds{.}}
\newcommand{\ibs}{\mathbf{i}}
\newcommand{\kappabs}{\bolds{\kappa}}
\newcommand{\nubs}{\bolds{\nu}}
\newcommand{\xibs}{\bolds{\xi}}
\newcommand{\ubs}{\mathbf{u}}
\newcommand{\jbs}{\mathbf{j}}
\newcommand{\ml}{\mathfrak{m}}
\newcommand{\gammabs}{\bolds{\gamma}}
\newcommand{\zbs}{\mathbf{z}}
\newcommand{\ybs}{\mathbf{y}}
\newcommand{\xbs}{\mathbf{x}}
\newcommand{\Tr}{\operatorname{Tr}}
\newcommand{\lambdabs}{\bolds{\lambda}}
\newcommand{\mmodels}{\vdash}
\def\dotplus{\mathrel{\dot{+}}}
\def\Id{I}
\def\given{\mid}
\def\mfk{{\mathfrak K}}
\def\diag{\operatorname{diag}}
\def\var{\operatorname{var}}
\def\cov{\operatorname{cov}}
\begin{document}
\begin{frontmatter}

\title{Natural statistics for spectral samples}
\runtitle{Natural statistics for spectral samples}

\begin{aug}
\author[A]{\fnms{E.} \snm{Di Nardo}\corref{}\ead[label=e1]{elvira.dinardo@unibas.it}},
\author[B]{\fnms{P.} \snm{McCullagh}\ead[label=e2]{pmcc@galton.uchicago.edu}}
\and
\author[A]{\fnms{D.} \snm{Senato}\ead[label=e3]{domenico.senato@unibas.it}}
\runauthor{E. Di Nardo, P. McCullagh and D. Senato}
\affiliation{University of Basilicata, University of Chicago and
University of Basilicata}
\address[A]{E. Di Nardo\\
D. Senato\\
Department of Mathematics\\
\quad Computer Science\\
\quad and Economics\\
University of Basilicata\\
Viale dell'Ateneo Lucano, 10\\
I-85100, Potenza\\
Italy\\
\printead{e1}\\
\hphantom{E-mail: }\printead*{e3}}
\address[B]{P. McCullagh\\
Department of Statistics\\
University of Chicago\\
5734 University Ave \\
Chicago, Illinois 60637\\
USA\\
\printead{e2}} 
\end{aug}

\received{\smonth{6} \syear{2012}}
\revised{\smonth{2} \syear{2013}}

%
\begin{abstract}
Spectral sampling is associated with the group of unitary
transformations acting on matrices in much the same way that simple
random sampling is associated with the symmetric group acting on
vectors. This parallel extends to symmetric functions, $k$-statistics
and polykays. We construct spectral \mbox{$k$-statistics} as unbiased
estimators of cumulants of trace powers of a suitable random matrix.
Moreover we define normalized spectral polykays in such a way that when
the sampling is from an infinite population they return products of
free cumulants.
\end{abstract}

%
\begin{keyword}[class=AMS]
\kwd[Primary ]{60B20}
\kwd{46L53}
\kwd[; secondary ]{62F12}
\end{keyword}
\begin{keyword}
\kwd{Random matrix}
\kwd{cumulant of traces}
\kwd{free cumulant}
\kwd{polykays}
\end{keyword}

\end{frontmatter}

\section{Outline}\label{sec1}
The goals of this paper are threefold.

We first introduce the notion of spectral sampling as an operation on a
finite set of $n$ real numbers $\xbs=(x_1,\ldots, x_n)$ generating a
random set $\ybs=(y_1,\ldots, y_m)$ of $m\le n$ real numbers whose
distribution is determined by $\xbs$. Spectral sampling is not the same
as simple random sampling in the sense that $\ybs$ is not a subset of
$\xbs$, but the parallels are unmistakable and striking. In particular,
there exist symmetric functions $\mfk_\lambda$---analogous to
$k$-statistics and polykays---such that $E(\mfk_\lambda(\ybs)
\given\xbs) = \mfk_\lambda(\xbs)$. In other words, the average value of
$\mfk_\lambda(\cdot)$ for spectral samples $\ybs$ taken from $\xbs$ is
equal to $\mfk_\lambda(\xbs)$. The first goal is to obtain explicit
expressions for these spectral $k$-statistics, which is done in
Sections~\ref{sec3}--\ref{sec5} using symbolic umbral techniques.

The second goal is to elucidate some of the concepts associated with
freeness---free probability and free cumulants---in terms of
spectral sampling and spectral $k$-statistics.
For this purpose, spectral sampling may be viewed as a restriction
operation $X\mapsto Y$
from a freely randomized Hermitian matrix of order $n$
into a freely randomized Hermitian matrix of order $m\le n$,
and each spectral $k$-statistic is class function depending only on the
matrix eigenvalues.
In essence, the spectral $k$-statistics tell us which spectral
properties are preserved on average by freely randomized matrix restriction.
For example,\vadjust{\goodbreak} $\mfk_{(1)}(\xbs) = \bar{\xbs}$ tells us that the
eigenvalue average
is preserved.
Likewise, if $k_2$ denotes the usual sample variance with divisor $n-1$,
the second spectral statistic $\mfk_{(2)}(\xbs) = k_2(\xbs)/(n+1)$
tells us that the eigenvalue sample variance is not
preserved, but is, on average, proportional to the sample size plus one.

Finally, by considering the limit as $n\to\infty$, we show that the
normalized spectral $k$-statistics are related to free cumulants
in much the same way that polykays are related to ordinary cumulants.

\section{Spectral sampling}\label{sec2}
\subsection{Definition}\label{sec2.1}
A random Hermitian matrix $A$ of order $n$ is said to be
\textit{freely randomized} if its distribution is invariant under
unitary conjugation,
that is, $A \sim G A G^\dag$ for each unitary $G$.
In particular, if $H$ is uniformly distributed with
respect to Haar measure on the group of unitary matrices of order $n$,
$HAH^\dag$ is freely randomized.
If $A$ is freely randomized, each leading sub-matrix is also freely randomized.

Let $\xbs=(x_1,\ldots, x_n)$ be given real numbers,
let $X=\diag(\xbs)$ be the associated diagonal matrix
and let $HXH^\dag$ be the freely randomized matrix.
The sample matrix $Y$ is the leading $m\times m$ sub-matrix
in the freely randomized matrix, that is, $Y = (HXH^\dag)_{[m\times m]}$.

%
%
\begin{defn}[(Spectral sample)] \label{Def2}
The set of eigenvalues $\ybs=(y_1,\ldots,\break y_m) \in\Real^m$ of
the $m \times m$
Hermitian random matrix $Y = (H X H^\dag)_{[m \times m]}$ is called
a \textit{spectral sample} of size $m$ from $\xbs$.
\end{defn}

For $m=n$, the distribution is uniform with the same weight $1/n!$ on
all permutations
$\sigma\in{\mathfrak S}_n$; that is, $\ybs$ is a random permutation
of $\xbs$.
For $m < n$, however, the distribution in $\Real^m$ is nonatomic,
so the sample values $\ybs$ do not ordinarily occur among
the components of $\xbs$.

If the group of unitary transformations in the preceding definition
were replaced
by a sub-group, the sampling distribution would be altered accordingly.
The most obvious subgroups are the group of orthogonal transformations
and the group of permutations $[n]\to[n]$;
in each case there is an associated family of spectral functions
such that $E(\mfk_\lambda(\ybs)\given\xbs) = \mfk_\lambda(\xbs)$.
In particular, if $H$ is a uniform random permutation,
$\ybs$ is a simple random sample of size $m$ taken from $\xbs$, and
the associated spectral functions are the classical $k$-statistics
due to Fisher~\cite{Fisher} and the polykays due to Tukey
\cite{Tukey1}.
%
%
\begin{rem}
Within image compression~\cite{Raviraj}, the random Hermitian
matrix $Y$ in Definition~\ref{Def2}
is called a two-dimensional Haar transform. More generally, if $X$ is a
full matrix whose entries are the pixels ranging from $0$ (black) to
$255$ (white), then $Y$ contains reduced information extracted from $X$
via the rectangular Haar matrix $H$. Similar transformations are
employed also within classification, document analysis, hardware
implementation and are known as downsampling of a vector or a matrix
\cite{Strang}.\vadjust{\goodbreak}
\end{rem}
%
\subsection{Natural statistics}\label{sec2.2}
For present purposes, a statistic $T$ is a collection of functions
$T_n\colon\Real^n\to\Real$ such that $T_m(\ybs)$ and $T_n(\xbs)$
are defined for all samples sufficiently large.
For example, the usual sample variance is defined for $n\ge2$,
while the sample skewness is defined for $n\ge3$.

%
%
\begin{defn}[(Natural statistic)] \label{def1}
A statistic $T$ is said to be \textit{natural} if, for each $m\le n$,
the average value of $T_m(\cdot)$ over random samples $\ybs$
drawn from $\xbs$ is equal to $T_n(\xbs)$.
In symbols,
\[
E \bigl(T_m(\ybs)\given\xbs\bigr)=T_n(\xbs)
\]
for each $m \leq n$.
\end{defn}
Obviously, the definition depends on what it means for
$\ybs$ to be a random sample drawn from $\xbs$,
that is, the choice of group in Definition~\ref{Def2}.
Thus, a statistic that is natural with respect to simple random sampling
(a $U$-sta\-tistic)
is not, in general, natural with respect to spectral sampling.

In Tukey~\cite{Tukey1}, such functions were said to be ``inherited on the
average.'' The key point in Definition~\ref{def1}
is that a natural statistic is not a single function in isolation, but a
list of functions $\{T_n\dvtx \Real^n \rightarrow\Real\}$.
It is the property of inheritance that gives these functions a common
interpretation independent of the sample size. One might be inclined to
think that
inheritance is no different from unbiasedness relative to a model with
exchangeably distributed components. However, unbiasedness of $T_n$
does not
imply the inheritance property, nor does inheritance imply that the
statistic has a
limit or that its expectation exists. Unbiasedness in parametric models
is a
property of individual functions $T_n$, whereas inheritance is a
property of the sequence.

For $m=n$, inheritance implies that each $T_n$ is a symmetric function:
$T_n(\xbs)$ is
equal to the average of the values on the permutations of $\xbs$.
Tukey~\cite{Tukey1} proved that the symmetric functions
%
%
\begin{eqnarray}\label{augnorm}
\tilde{{\mathfrak a}}_{r,n}(\xbs) & = & \frac{1}{n} \sum
_i x_i^r,\qquad \tilde{{\mathfrak
a}}_{rs,n}(\xbs) = \frac{1}{(n)_2} \sum_{i\ne j}
x_i^r x_j^s,
\nonumber\\[-8pt]\\[-8pt]
\tilde{{\mathfrak a}}_{rst,n}(\xbs) & = & \frac{1}{(n)_3} \sum
_{i
\ne j \ne k} x_i^r x_j^s
x_k^t,\qquad \ldots\nonumber
\end{eqnarray}
defined, respectively, for $n \geq1, n \geq2$ and so on, are
natural with respect to simple random sampling.
Here and elsewhere $(n)_r = n(n-1)\cdots(n-r+1)$ denotes the
descending factorial function.
Ordinarily, we suppress the index $n$ and write
$\tilde{{\mathfrak a}}_{rs}(\xbs)$ instead of
$\tilde{{\mathfrak a}}_{rs,n}(\xbs)$,
the value of $n$ being inferred from the argument $\xbs\in\Real^n$.
The unnormalized polynomials
%
%
\begin{eqnarray}\label{aug}
{\mathfrak a}_{r}(\xbs)&=& \sum_i
x_i^r,\qquad {\mathfrak a}_{rs}(\xbs) = \sum
_{i\ne j} x_i^r
x_j^s,\nonumber\\[-8pt]\\[-8pt] {\mathfrak a}_{rsk}(\xbs) &=& \sum
_{i \ne j \ne t} x_i^r
x_j^s x_t^k,\qquad \ldots\nonumber
\end{eqnarray}
are the well-known \textit{augmented symmetric functions}
\cite{Kendall}.

Every expression which is a polynomial, symmetric and inherited on the
average can
be written as a linear combination of the statistics in (\ref
{augnorm}) with
coefficients that do not depend on the size of the set~\cite{Tukey1}.
Consequently each linear combination, with scalar coefficients
independent of $n$, also has the inheritance property, as happens, for
example, for
$U$-statistics. The combinations that have proved to be most useful for
statistical
purposes are the $k$-statistics due to Fisher~\cite{Fisher} and the
polykays due to Tukey
\cite{Tukey1,Tukey2}, defined as follows:
\begin{eqnarray*}
k_{(1)} &=& \tilde{{\mathfrak a}}_{(1)};
\\
k_{(1^2)} &=& \tilde{{\mathfrak a}}_{(1^2)},\qquad k_{(2)} =
\tilde{{\mathfrak a}}_{(2)} - \tilde{{\mathfrak a}}_{(1^2)};
\\
k_{(1^3)} &=& \tilde{{\mathfrak a}}_{(1^3)},\qquad k_{(12)} =
\tilde{{\mathfrak a}}_{(12)} - \tilde{{\mathfrak a}}_{(1^3)},\qquad
k_{(3)} = \tilde{{\mathfrak a}}_{(3)} - 3 \tilde{{\mathfrak
a}}_{(12)} + 2 \tilde{{\mathfrak a}}_{(1^3)};
\\
k_{(1^4)} &=& \tilde{{\mathfrak a}}_{(1^4)},\qquad k_{(1^2 2)} =
\tilde{{\mathfrak a}}_{(1^2 2)} - \tilde{{\mathfrak a}}_{(1^4)},\qquad
k_{(13)} = \tilde{{\mathfrak a}}_{(13)} - 3 \tilde{{\mathfrak
a}}_{(1^2 2)} + 2 \tilde{{\mathfrak a}}_{(1^4)};
\\
k_{(2^2)} &=& \tilde{{\mathfrak a}}_{(2^2)} - 2 \tilde{{\mathfrak
a}}_{(1^2 2)} + \tilde{{\mathfrak a}}_{(1^4)},\\
k_{(4)} &=&
\tilde{{\mathfrak a}}_{(4)} - 4 \tilde{{\mathfrak a}}_{(13)} - 3
\tilde{{\mathfrak a}}_{(2^2)} + 12 \tilde{{\mathfrak a}}_{(1^2 2)} - 6
\tilde{{\mathfrak a}}_{(1^4)}.
\end{eqnarray*}
The single index $k$'s are the $k$-statistics;
the multi-index $k$'s are the polykays.
For a sample of i.i.d. variables,
each $k$-statistic is an unbiased estimator of the population cumulant,
and each polykay is an unbiased estimator of cumulant products.
The \textit{degree} of each $k$ is the sum of the subscripts.
The set of natural polynomial statistics of degree $i$ is a vector space,
of dimension equal to the number of partitions of the integer $i$,
spanned by the $k$'s of degree $i$.
%

\section{Moment symbolic method}\label{sec3}
\textit{Univariate case}. The moment symbolic method relies on the classical
umbral calculus introduced by Rota and Taylor in 1994~\cite{SIAM},
which has been developed and refined in a series of papers starting from
\cite{Dinsen,Dinardoeurop}.
The result is a calculus in which certain symbols represent scalar
or polynomial sequences, thereby reducing the overall computational apparatus.
We now review the key components.

Let $R$ be the real or complex field whose elements are called scalars.
An umbral calculus consists of a generating set $\A=\{\alpha,\beta,
\ldots\}$, called the \textit{alphabet}, whose elements are named
\textit{umbrae}, a polynomial ring $R[\A]$ and a linear functional
$E\colon R[\A]\to R$ called evaluation. The linear functional is such
that $E[1]=1$ and
%
%
\begin{equation}\label{iii}\quad
E\bigl[\alpha^{i} \beta^{j} \cdots\gamma^{k}
\bigr] = E\bigl[\alpha^{i}\bigr] E\bigl[\beta^{j}\bigr]
\cdots E\bigl[\gamma^{k}\bigr]\qquad {\mbox{(uncorrelation property)}}
\end{equation}
for any set of distinct umbrae in $\A$ and for $i,j,k$
nonnegative integers.
To each umbra $\alpha\in\A$ there corresponds a
sequence of scalars\vadjust{\goodbreak} $a_i = E[\alpha^i]$ for $i=0,1,\ldots$ such that $a_0=1$.
The scalar $a_i$ is called the $i$th moment of $\alpha$.
Indeed any scalar random variable possessing finite moments can be
represented by an umbra.
A~scalar sequence $\{a_i\}$ with $a_0=1$ is said to be represented by an
umbra $\alpha$ if $E[\alpha^i] = a_i$ for $i=0,1,\ldots\,$.
%
%
\begin{ex}
The sequence $1,0,0,\ldots$ is umbrally represented by
the \textit{augmentation umbra} $\varepsilon$, and
$1,1,1,\ldots$ is umbrally represented by the \textit{unity umbra}~$u$.
These are the umbral versions of two degenerate random variables
such that $P(X=0)=1$ and $P(Y=1)=1$.
The sequence of moments of a unit Poisson random variable is
umbrally represented by the \textit{Bell umbra} $\beta$.
This umbra plays a fundamental role in the symbolic method, as we will
see later.
Its $i$th moment is the Bell number, which is
the coefficient of $z^i/i!$ in the Taylor expansion of $\exp(e^z-1)$.
\end{ex}
Since an umbra is a formal object, questions involving
the moment problem are not taken into account.
Indeed, not every umbra corresponds to a real-valued random variable.
%
%
\begin{ex}
The sequence $1,1,0,0,\ldots$ is represented by the \textit{singleton}
umbra $\chi$. Its variance $E[\chi^2]-E[\chi]^2=-1$ is negative, so
there is no real-valued random variable corresponding to $\chi$.
Nevertheless this umbra plays a fundamental role in dealing with
cumulant sequences, as we will see later.
\end{ex}

It is always possible to make the alphabet large enough so that,
to each scalar sequence $\{a_i\}$, there corresponds an umbra $\alpha
\in\A$,
which is not necessarily unique.
The same applies to identically distributed random variables.
Two umbrae $\alpha$ and $\gamma$ having the same moment sequence are
called similar,
in symbols $\alpha\equiv\gamma$, and $\A$ contains an unlimited
supply of distinct umbrae
similar to $\alpha$, usually denoted by $\alpha', \alpha'',\ldots\,$.
If the sequence $\{a_i\}$ is umbrally represented by $\alpha$, then
\begin{eqnarray*}
&\mbox{the sequence } \bigl\{2^i a_i \bigr\}
\mbox{ is represented by } \alpha+ \alpha= 2 \alpha,&
\\
&\mbox{the sequence } \displaystyle \Biggl\{ \sum_{k=0}^i
\pmatrix{i
\cr
k} a_k a_{i-k} \Biggr\} \mbox{ is
represented by } \alpha+ \alpha^{\prime}.&
\end{eqnarray*}
An expression such as $2 \alpha$ or $\alpha+ \alpha^{\prime}$ is an example
of an umbral polynomial, that is, a polynomial $p \in R[\A]$ in the
umbrae of $\A$.
The support of an umbral polynomial is the set of all umbrae that occur
in it.
So the support of $\alpha+ \alpha^{\prime}$ is $\{\alpha,\alpha
^{\prime}\}$,
and the support of $2 \alpha$ is $\{\alpha\}$. The formal power series
%
%
\begin{equation}\label{gf1}
e^{\alpha z} = u + \sum_{i \geq1}
\alpha^i \frac{z^i}{i!} \in R[\A] [[z]]
\end{equation}
is the \textit{generating function} of the umbra $\alpha$.
Moreover, each exponential formal power series
%
%
\begin{equation}\label{formpow}
f(z) = 1 + \sum_{i \geq1} a_i
\frac{z^i}{i!} \in R[[z]]
\end{equation}
can be umbrally represented by a formal power series (\ref{gf1}) in
$R[\A][[z]]$~\cite{Taylor1}.
In fact, if the sequence $1,a_1,a_2,\ldots$ is umbrally represented by
$\alpha$,
the action of evaluation $E$ can be extended coefficient-wise to formal
power series
(\ref{gf1}), so that $E[e^{\alpha z}]=f(z)$. For clarity we denote
the generating function of $\alpha$ by $f(\alpha, z) = E[e^{\alpha
z}]$. Therefore
$\alpha\equiv\alpha^{\prime}$ if and only if $f(\alpha,z)=f(\alpha
^{\prime},z)$.

The first advantage of umbral notation is the representation
of operations on generating functions with operations on umbrae.
For example, multiplication of exponential generating functions
is umbrally represented by the sum of the corresponding umbrae, that is,
\[
f(\alpha+\gamma,z) = f(\alpha,z)f(\gamma,z).
\]
Therefore $f(\alpha,z)^2$ is the generating function of $\alpha+
\alpha^{\prime}$, which is different from the generating function
$f(\alpha,2 z)$ of $2 \alpha$. The sum of generating functions is
represented by the auxiliary umbra $\alpha\dotplus\gamma$, named the
\textit{disjoint sum} of two umbrae, that is,
\[
f(\alpha\dotplus\gamma,z) = f(\alpha,z) + f(\gamma,z) -1,
\]
so that $E[(\alpha\dotplus\gamma)^i]=E[\alpha^i] + E[\gamma^i]$
for all positive integers $i$.
Then $2 f(\alpha,z) - 1$ is the generating function of
$\alpha\dotplus\alpha$ or $\alpha\dotplus\alpha^{\prime}$,
and $\alpha\dotplus\alpha\equiv\alpha\dotplus\alpha^{\prime}$.

It is also possible to compose generating functions and to
represent the composition as the generating function of an umbra.
First consider $n$ uncorrelated umbrae
$\alpha',\alpha'',\ldots,\alpha'''$ similar to $\alpha$ and take their
sum: the resulting umbra $\alpha' + \alpha'' + \cdots+ \alpha'''$,
denoted by $n \punt\alpha$, is called the \textit{dot product} of the
integer $n$ and the umbra $\alpha$. Its generating function is $f(n
\punt\alpha,z) = [f(\alpha,z)]^n$ and the moments are~\cite{Bernoulli}
%
%
\begin{equation}\label{sum3}
E\bigl[(n \punt\alpha)^i\bigr] = \sum_{\lambda\vdash i}
d_{\lambda} (n)_{l({\lambda})} a_{\lambda} \qquad\mbox{with } d_{\lambda}
= \frac{i!}{(1!)^{r_1} r_1!
(2!)^{r_2} r_2! \cdots},
\end{equation}
where $\lambda$ is a partition of the integer $i$ into $l(\lambda)$ parts,
and
$a_{\lambda}=a_1^{r_1} a_2^{r_2} \cdots$ is the moment product \cite
{Dinardoeurop}.
The right-hand side of (\ref{sum3}) corresponds to
$E[(X_1 + \cdots+ X_n)^i]$ with $X_1,\ldots, X_n$ i.i.d. with
moment sequence represented by the umbra $\alpha$.
In (\ref{sum3}), set $E[(n \punt\alpha)^i]=q_i(n)$, which
is a polynomial of degree $i$ in $n$.
If the integer $n$ is replaced by any umbra $\gamma\in\A$,
and $(\gamma)_j= \gamma(\gamma- 1) \cdots(\gamma-j + 1)$
denotes the descending factorial polynomial, then we have
$q_i(\gamma) = \sum_{\lambda\vdash i} (\gamma)_{l({\lambda})}
d_{\lambda} a_{\lambda}$.
The symbol $\gamma\punt\alpha$ such that $E[(\gamma\punt\alpha
)^i] = E[q_i(\gamma)]$
is called the \textit{dot-product} of the umbrae $\alpha$ and $\gamma$.
This last equality could be rewritten by using the umbral
equivalence $\simeq$
such that $p \simeq q$ iff $E[p] = E[q]$ with $p,q \in R[\A]$.
Then we have $(\gamma\punt\alpha)^i \simeq q_i(\gamma)$.
More generally, the umbral equivalence turns out to be useful in
dealing with
umbral polynomials with nondisjoint supports as we will see later.
The replacement of the integer $n$ with the umbra $\gamma$ is an example
of the main device employed in the symbolic method, allowing us
to represent more structured moment sequences starting from (\ref{sum3}).
Observe that we move from the generating function $[f(\alpha,z)]^n$
to the generating function
$f(\gamma\punt\alpha, z) = f(\gamma, \log[f(\alpha,z)])$, which
is not yet
the composition of $f(\alpha,z)$ and $f(\gamma,z)$.
For this purpose, the umbra $\alpha$ in the dot product $\gamma\punt
\alpha$
has to be replaced by a dot product involving
the Bell umbra, that is, $\beta\punt\alpha$.
The dot product $\beta\punt\alpha$ is called the
$\alpha$-\textit{partition umbra} with generating function
$f(\beta\punt\alpha,z) = \exp(f(\alpha,z) - 1)$.
A special property which we use later is
%
%
\begin{equation}\label{props}
\beta\punt(\alpha\dotplus\gamma) \equiv\beta\punt\alpha+ \beta\punt
\gamma.
\end{equation}
The symbol $\gamma\punt(\beta\punt\alpha)$ has generating function
which is
the composition of $f(\alpha,z)$ and $f(\gamma,z)$
%
%
\begin{equation}\label{composition}
f\bigl(\gamma\punt(\beta\punt\alpha), z\bigr) = f\bigl(\gamma, f(\alpha,z)-1
\bigr).
\end{equation}
Parenthesis can be avoided since $\gamma\punt(\beta\punt\alpha)
\equiv(\gamma\punt\beta) \punt\alpha$.
The moments are
%
%
\begin{equation}\label{comp}
E\bigl[(\gamma\punt\beta\punt\alpha)^i\bigr] = \sum
_{\lambda\vdash i} g_{l({\lambda})}^{} d_{\lambda}
a_{\lambda},
\end{equation}
where $\{g_{i}\}$ are the moments of the umbra $\gamma$~\cite{Dinardoeurop}.
%
%
\begin{ex} \label{comppoiss}
The composition umbra arises naturally in connection with random sums
$X_1+ \cdots+X_N$, where the $X$'s are i.i.d., and $N$ is distributed
independently of $X$. The cumulant generating function of the sum is
the composition $K_N(K_X(t))$ of the two generating functions. In
probability theory, $N$ is necessarily integer-valued, but there is no
such constraint on the umbra $\gamma$.
\end{ex}
Strictly connected to the composition umbra is the \textit{compositional
inverse umbra} $\alpha^{\langle-1\rangle}$ of an umbra $\alpha
$, such that
%
%
\begin{equation}\label{compinv}
\alpha^{\langle-1\rangle} \punt\beta\punt\alpha\equiv\chi\equiv\alpha
\punt\beta
\punt\alpha^{\langle-1\rangle}.
\end{equation}
A special compositional inverse umbra is $u^{\langle-1\rangle}$,
with $u$ the unity umbra,
having generating function
%
%
\begin{equation}\label{gencompinvu}
f\bigl(u^{\langle-1\rangle},z\bigr)=1+\log(1+z)
\end{equation}
so that its $i$th moment is
%
%
\begin{equation}\label{momcompinvu}
E\bigl[\bigl(u^{\langle-1\rangle}\bigr)^i\bigr]=(-1)^{i-1}
(i-1)!.
\end{equation}

\textit{Multivariate case.} Let $\{\nu_1,\ldots, \nu_m\}$ be a set of
umbral monomials
with support not necessarily disjoint. A vector sequence $\{g_{\ibs}\}
_{\ibs\in\mathbb{N}_0^m} \in R$, with $g_{\ibs} = g_{i_1, i_2,\ldots,
i_m}$ and $g_{\mathbf0} = 1$, is
represented by the $m$-tuple $\nubs=(\nu_1,\ldots,\nu_m)$ if
%
%
\begin{equation}\label{multmoments}
E\bigl[\nubs^{\ibs}\bigr] = g_{\ibs}
\end{equation}
for each multi-index $\ibs\in\mathbb{N}_0^m$. The elements $\{
g_{\ibs}\}_{\ibs\in\mathbb{N}_0^m}$ in (\ref{multmoments}) are
called \textit{multivariate moments} of $\nubs$.
%
%
\begin{rem}
Within random variables, the $m$-tuple $\nubs=(\nu_1,\ldots,\nu
_m)$ corresponds to a random vector $(X_1,\ldots, X_m)$. If $\{\nu
_i\}_{i=1}^m$ are uncorrelated
umbrae, then $g_{\ibs}= E[\nu_1^{i_1}] \cdots E[\nu_n^{i_m}]$, and
we recover the univariate symbolic method. The same happens
if $\{\nu_i\}_{i=1}^m$ are umbral monomials with disjoint supports.
\end{rem}
As done in (\ref{gf1}), the generating function of $\nubs$ is the
formal power series
%
%
\begin{equation}\label{gf}
e^{\nu_1 z_1 + \cdots+ \nu_m
z_m} = u + \sum_{k \geq1} \sum
_{{|\ibs|=k}} \nubs^{\ibs}\frac
{\zbs^{\ibs}}{\ibs!} \in R[\A]
[[z_1,\ldots,z_m]]
\end{equation}
with $\zbs= (z_1,\ldots, z_m), |\ibs|=i_1 + \cdots+ i_m$ and $\ibs
!=i_1! \cdots i_m!$.
If the sequence $\{g_{\ibs}\}$ is umbrally represented by $\nubs$ and
has (exponential) generating function
%
%
\begin{equation}\label{genfun1}
f(\zbs) = 1 + \sum_{k \geq1} \sum
_{|\ibs|=k} g_{\ibs} \frac
{\zbs^{\ibs}}{\ibs!},
\end{equation}
then $E[e^{\nu_1 z_1 + \cdots+ \nu_m z_m}] = f(\zbs)$. Taking into
account (\ref{multmoments}), the generating function in (\ref{genfun1})
is denoted by $f(\nubs,\zbs)$.
Two umbral vectors $\nubs_1$ and $\nubs_2$ are said to be
\textit{similar}, in symbols $\nubs_1 \equiv\nubs_2$, if and only if
$f(\nubs_1,\zbs)=f(\nubs_2,\zbs)$, that is, $E[\nubs_1^{\ibs
}]=E[\nubs_2^{\ibs}]$ for all $\ibs\in\mathbb{N}_0^m$. They are said to
be \textit{uncorrelated} if and only if $E[\nubs_1^{\ibs}
\nubs_2^{\jbs}]= E[\nubs_1^{\ibs}]E[\nubs_2^{\jbs}]$ for all $\ibs,
\jbs\in\mathbb{N}_0^m$.

An equation analogous to (\ref{sum3}) could be given for the
multivariate case,
provided that integer partitions are replaced with multi-index
partitions~\cite{Statcomp}.
A partition of a multi-index $\ibs$ is a composition $\lambdabs$,
whose columns
are in lexicographic order, in symbols $\lambdabs\mmodels\ibs$.
A composition $\lambdabs$ of a multi-index $\ibs$ is a matrix
$\lambdabs= (\lambda_{ij})$ of nonnegative integers and
with no zero columns such that $\lambda_{r1}+\lambda_{r2}+\cdots
+\lambda_{rk}=i_r$ for
$r=1,2,\ldots,n$. The number of columns of $\lambdabs$ is the length
of $\lambdabs$
and denoted by~$l(\lambdabs)$. As for integer partitions, the notation
$\lambdabs= (\lambdabs_{1}^{r_1}, \lambdabs_{2}^{r_2}, \ldots)$
means that in the matrix $\lambdabs$ there are $r_1$ columns equal to
$\lambdabs_{1}$,
$r_2$ columns equal to $\lambdabs_{2}$ and so on, with $\lambdabs_{1} <
\lambdabs_{2} < \cdots\,$. We set
$\ml(\lambdabs)=(r_1, r_2,\ldots)$.
The dot-product $n \punt\nubs$ of a nonnegative integer $n$ and a
$m$-tuple $\nubs$ is an auxiliary umbra
denoting the summation $\nubs^{\prime} + \nubs^{\prime\prime} +
\cdots+ \nubs^{\prime\prime\prime}$ with
$\{\nubs^{\prime}, \nubs^{\prime\prime},\ldots, \nubs^{\prime
\prime\prime}\}$ a set of $n$ uncorrelated and similar $m$-tuples.
For $\ibs\in{\mathbb N}^m_0$ and $m$-tuples $\nubs$ of umbral
monomials, we have
%
%
\begin{equation}\label{eq15}
E\bigl[(n \punt\nubs)^{\ibs}\bigr] = \sum_{\lambdabs\mmodels\ibs}
\frac{\ibs!}{\ml(\lambdabs)! \lambdabs!} (n)_{l(\lambdabs)}
g_{\lambdabs},\vadjust{\goodbreak}
\end{equation}
where the sum is over all partitions $\lambdabs= (\lambdabs
_{1}^{r_1}, \lambdabs_{2}^{r_2}, \ldots)$ of the multi-index $\ibs,
g_{\lambdabs} = g_{\lambdabs_1}^{r_1} g_{\lambdabs_2}^{r_2} \cdots$
and $g_{\lambdabs_i} = E[\nubs^{\lambdabs_i}]$. The sequence\vspace*{1pt} in
(\ref{eq15}) represents moments of a sum of i.i.d. random vectors
with sequence of moments $\{g_{\ibs}\}$. If we replace the integer $n$
in (\ref{eq15}) with the dot-product
$\alpha\punt\beta$ we get the auxiliary umbra $\alpha\punt\beta
\punt\nubs$
representing the sequence of moments
%
%
\begin{equation}\label{eq16}
E\bigl[(\alpha\punt\beta\punt\nubs)^{\ibs}\bigr] = \sum
_{\lambdabs
\mmodels\ibs} \frac{\ibs!}{\ml(\lambdabs)! \lambdabs!} a_{l(\lambdabs)}
g_{\lambdabs},
\end{equation}
where the sequence $\{a_i\}$ is umbrally represented by $\alpha$.
In particular the generating function of the auxiliary umbra $\alpha
\punt\beta\punt\nubs$ turns to be the composition
of the univariate generating function $f(\alpha,z)$ and the
multivariate generating function $f(\nubs,\zbs)$
%
%
\begin{equation}\label{multivariate1}
f(\alpha\punt\beta\punt\nubs,\zbs)=f\bigl[\alpha, f(\nubs,\zbs)-1\bigr].
\end{equation}
From Example~\ref{comppoiss}, the umbra $\alpha\punt\beta\punt
\nubs$ is a generalization of a multivariate compound randomized
Poisson random vector.
In the next section, we show how this umbra allows us to write a
formula for multivariate cumulants involving multi-index partitions.
More details on the symbolic composition of multivariate formal power
series can be found in~\cite{Multivariate}.

\section{Formal cumulants}\label{sec4}
\subsection{Definition}\label{sec4.1}
Among the sequences of numbers related to a real-valued random variable,
cumulants play a central role.
Whether or not the sequence $\{a_i\}$ corresponds to the moments of
some distribution,
we define cumulants $\{c_i\}$ by the following equation:
%
%
\begin{equation}\label{formalcumulants}
1 + \sum_{i \geq1} a_i \frac{z^i}{i!} =
\exp\biggl\{ \sum_{i \geq
1} c_i
\frac{z^i}{i!} \biggr\}.
\end{equation}
If $\alpha$ is an umbra representing the sequence $\{a_i\}$, and
$\kappa_{{\alpha}}$
is an umbra representing the sequence $\{c_i\}$, then by comparing
(\ref{formalcumulants}) with (\ref{composition}) we have
%
%
\begin{equation}\label{cumuniv}
\alpha\equiv u \punt\beta\punt\kappa_{{\alpha}},
\end{equation}
since $f(u,z)=\exp(z)$. The umbra $\kappa_{{\alpha
}}$ is called
the $\alpha$-\textit{cumulant umbra}~\cite{Bernoulli} and is such that
%
%
\begin{equation}\label{genfuncum}
f(\kappa_{{\alpha}}, z) = 1 + \log\bigl(f(\alpha,
z)\bigr).
\end{equation}
By comparing (\ref{genfuncum}) with (\ref{composition}) and (\ref
{gencompinvu}), we have
%
%
\begin{equation}\label{fundcum}
\kappa_{{\alpha}} \equiv u^{\langle-1\rangle} \punt
\beta\punt\alpha.
\end{equation}
Since $u^{\langle-1\rangle} \punt\beta\punt u \equiv
u^{\langle-1\rangle} \punt\beta\equiv\chi$, then equivalence
(\ref{fundcum}) reduces to
\[
\kappa_{{\alpha}} \equiv\chi\punt\alpha.
\]

The algebraic properties of cumulants can be formalized as
%
%
\begin{eqnarray}
\label{propcum1}
\mbox{\textit{Homogeneity}}\quad\quad\hspace*{3pt} \chi\punt(a \alpha) &\equiv&
a (\chi
\punt\alpha)\qquad\mbox{if } a \in R,
\nonumber\\[-8pt]\\[-8pt]
\mbox{\textit{Additivity}}\quad\chi\punt(\alpha+ \gamma) &\equiv& \chi\punt\alpha
\dotplus
\chi\punt\gamma.
\nonumber
\end{eqnarray}
The \textit{semi-invariance under translation} follows from both
equivalences in (\ref{propcum1}) by choosing
as umbra $\alpha$ the unity umbra $u$.

As done in (\ref{formalcumulants}), multivariate formal cumulants $\{
c_{\ibs}\}$ of a
sequence of multivariate moments $\{g_{\ibs}\}$ can be defined via
generating functions.
Indeed, if $\{g_{\ibs}\}$ is umbrally represented by the $m$-tuple
$\nubs$, then the sequence $\{c_{\ibs}\}$ is umbrally represented by
the $m$-tuple $\kappabs_{\nubs}$ such that
%
%
\begin{equation}\label{genfun}
f(\nubs,\zbs) = \exp\bigl[f(\kappabs_{\nubs},\zbs) - 1 \bigr].
\end{equation}
The $m$-tuple $\kappabs_{\nubs}$ is named $\nubs$-cumulant. By
comparing (\ref{multivariate1}) with (\ref{genfun}), the
following equivalence follows:
%
%
\begin{equation}\label{momcumumbral}
\nubs\equiv u \punt\beta\punt\kappabs_{\nubs}.
\end{equation}
Equivalence (\ref{momcumumbral}) can be inverted in
%
%
\begin{equation}\label{cummomumbral}
\kappabs_{\nubs} \equiv u^{\langle-1\rangle} \punt\beta\punt\nubs
\qquad\mbox{with } f(\kappabs_{\nubs},\zbs) = 1 +
\log\bigl[f(\nubs,\zbs)\bigr],
\end{equation}
where $u^{\langle-1\rangle}$ is the compositional inverse of the
unity umbra $u$.
Moments of $u^{\langle-1\rangle} \punt\beta\punt\nubs$ can
be computed
via equation (\ref{eq16}) by recalling (\ref{momcompinvu}). As before,
the umbra $u^{\langle-1\rangle} \punt\beta$ may be replaced by
the umbra $\chi$, so that
%
%
\begin{equation}\label{cummomumbral1}
\kappabs_{\nubs} \equiv\chi\punt\nubs.
\end{equation}
Thanks to this last representation, the algebraic properties of
cumulants can be formalized as
%
%
\begin{eqnarray}
\label{propcum}
\mbox{\textit{Homogeneity}}\quad\quad\hspace*{11pt}\chi\punt(a \nubs) &\equiv& a
(\chi\punt
\nubs)\qquad\mbox{if } a \in R,
\nonumber
\\
\mbox{\textit{Additivity}}\quad\chi\punt(\nubs_1 + \nubs_2)
&\equiv&\chi\punt\nubs_1 \dotplus\chi\punt\nubs_2\\
&&\eqntext{\mbox{if $\nubs_1$ and $\nubs_1$ are uncorrelated
$n$-tuples}.}
\end{eqnarray}
In the\vspace*{1pt} additivity property (\ref{propcum}), we have used the
disjoint sum of two $m$-tuples, that is,
$E[(\nubs_1 \dotplus\nubs_2)^{\ibs}]=E[\nubs^{\ibs}_1] + E[\nubs
^{\ibs}_2]$ for all $\ibs\in\mathbb{N}_0^m$.
The \textit{semi-invariance under translation} follows from both
equivalences in (\ref{propcum}) by choosing
the $m$-tuple $\ubs=(u,\ldots,u)$ as $\nubs$.
%
\subsection{Cumulants of trace powers}\label{sec4.2}
Let us represent the eigenvalues of a random matrix $M$ of dimension $m$
by the $m$-tuple of umbral monomials $\nubs=(\nu_1,\ldots, \nu_m)$.
Cumulants of $\nubs$ can be recovered via (\ref{momcumumbral}) and
(\ref{cummomumbral}).
In this section we will characterize cumulants of $\operatorname{Tr} (M)$,
that is, cumulants of the sequence $E[(\nu_1 + \cdots+\nu_m)^k]$ for
$k=1,2,\ldots\,$.
Observe that
%
%
\begin{equation}\label{cumtrace}
f(\nu_1 + \cdots+\nu_m,z)=E\bigl[e^{(\nu_1 + \cdots+\nu_m)z}
\bigr]=f(\nubs,\zbs)
\end{equation}
by using (\ref{gf}) and (\ref{genfun1}) with $\zbs=(z,\ldots,z)$.
Compositions of multivariate formal power series like $f(\nubs,\zbs)$
in (\ref{cumtrace}) are represented by symbols with a peculiar
expression. Indeed, if $\nubs$ is a $m$-tuple of umbral monomials with
generating function $f(\nubs,\zbs)$, and $\xibs$ is a $m$-tuple of
umbral monomials with generating function $f(\xibs,\zbs)$, the
$m$-tuple having generating function $f[\xibs,
(f(\nubs,\zbs)-1,\ldots,f(\nubs,\zbs)-1)]$ is \mbox{$(\xi_1 + \cdots+
\xi_m) \punt\beta\punt\nubs$}, that is,
%
%
\begin{equation}\label{multivariate2}\qquad
f\bigl[(\xi_1 + \cdots+ \xi_m) \punt\beta\punt\nubs,
\zbs\bigr] = f\bigl[\xibs, \bigl(f(\nubs,\zbs)-1,\ldots, f(\nubs,\zbs
)-1\bigr)
\bigr].
\end{equation}
As in (\ref{cumuniv}) and (\ref{fundcum}) for univariate and
multivariate cumulants, respectively, in
order to characterize cumulants of $f(\nubs,\zbs)$ in (\ref
{cumtrace}), we replace the $m$-tuple $\xibs$ with the $m$-tuple
$\ubs=(u,\ldots,u)$ in (\ref{multivariate2}).
Denote by ${\mathfrak c}_{\nubs}$ the $m$-tuple such that $\nubs
\equiv(u + \cdots+ u) \punt\beta\punt{\mathfrak c}_{\nubs}$, that is,
%
%
\begin{equation}\label{cumtrace1}
f(\nubs,\zbs)=f\bigl[(u + \cdots+ u) \punt\beta\punt{\mathfrak
c}_{\nubs}, \zbs\bigr]
\end{equation}
with $f(\nubs,\zbs)$ in (\ref{cumtrace}).
If ${\mathfrak c}_{\nubs} =({\mathfrak c}_{1,\nubs},\ldots,
{\mathfrak c}_{m,\nubs})$, then
$f({\mathfrak c}_{\nubs},\zbs) = f[{\mathfrak c}_{1,\nubs} + \cdots
+ {\mathfrak c}_{m,\nubs},z]$
and
\[
f\bigl[(u + \cdots+ u) \punt\beta\punt{\mathfrak c}_{\nubs}, \zbs
\bigr]=\exp\bigl\{m \bigl(f[{\mathfrak c}_{1,\nubs} + \cdots+ {\mathfrak
c}_{m,\nubs},z]-1\bigr)\bigr\}.
\]

%
\begin{defn}\label{firstdef}
For fixed $m$, formal cumulants of the sequence $\{E[(\nu_1 + \cdots
+\nu_m)^k]\}$ are umbrally represented by
the umbral polynomial ${\mathfrak c}_{1,\nubs} + \cdots+ {\mathfrak
c}_{m,\nubs}$, such that
%
%
\begin{equation}\label{cumtraceumbral}
\nu_1 + \cdots+\nu_m \equiv m \punt\beta\punt({
\mathfrak c}_{1,\nubs} + \cdots+ {\mathfrak c}_{m,\nubs})
\end{equation}
with ${\mathfrak c}_{\nubs} =({\mathfrak c}_{1,\nubs},\ldots,
{\mathfrak c}_{m,\nubs})$ given in
(\ref{cumtrace1}).
\end{defn}
In order to prove that the moments of the umbral polynomial ${\mathfrak
c}_{1,\nubs} +
\cdots+ {\mathfrak c}_{m,\nubs}$ satisfy the characterizing algebraic
properties of cumulants, we need to invert (\ref{cumtraceumbral}).
%
%
\begin{prop} \label{revert}
We have ${\mathfrak c}_{1,\nubs} + \cdots+ {\mathfrak c}_{m,\nubs}
\equiv\chi\punt\frac{1}{m} \punt(\nu_1 + \cdots+\nu_m)$.
\end{prop}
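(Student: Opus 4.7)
The plan is to invert the defining equivalence (\ref{(cumtraceumbral)}) by peeling off first the scalar~$m$ and then the partition umbra~$\beta$, using only univariate generating functions. Set $S := \nu_1 + \cdots + \nu_m$ and $C := \mathfrak{c}_{1,\nubs} + \cdots + \mathfrak{c}_{m,\nubs}$, so (\ref{(cumtraceumbral)}) reads $S \equiv m \punt \beta \punt C$. The computation displayed immediately after (\ref{(cumtrace1)}) already translates this into the single univariate power series identity
$$f(S,z) = \exp\{m\,(f(C,z) - 1)\},$$
and the whole proof will manipulate only this identity.

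First I strip the factor of $m$ by taking an $m$th root. Using $f(n \punt \alpha, z) = [f(\alpha,z)]^n$, extended from integer $n$ to the rational scalar $n = 1/m$ via the polynomial representation $E[(n \punt S)^i] = q_i(n)$ from (\ref{(sum3)}) evaluated at $n = 1/m$, I obtain
$$f\bigl(\tfrac{1}{m} \punt S,\, z\bigr) = [f(S,z)]^{1/m} = \exp(f(C,z) - 1) = f(\beta \punt C,\, z),$$
which in umbral terms reads $\tfrac{1}{m} \punt S \equiv \beta \punt C$.

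Second I apply the cumulant umbra $\chi$. Combining (\ref{fundcum}) with (\ref{(genfuncum)}) gives $f(\chi \punt \alpha, z) = 1 + \log f(\alpha,z)$ for any umbra~$\alpha$. Applied to $\alpha = \tfrac{1}{m} \punt S$ this yields
$$f\bigl(\chi \punt \tfrac{1}{m} \punt S,\, z\bigr) = 1 + \log \exp(f(C,z) - 1) = f(C,z),$$
so $\chi \punt \tfrac{1}{m} \punt S \equiv C$, which is the claim.

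The only delicate point is the meaning of $\tfrac{1}{m} \punt S$ when $1/m$ is not a positive integer, and I expect this to be routine rather than a real obstacle. It is well defined in two equivalent ways: as the umbra whose $i$th moment is $q_i(1/m)$, with $q_i$ the polynomial in (\ref{(sum3)}), or via $[f(S,z)]^{1/m}$ as a formal power series in $R[[z]]$, which exists because $f(S,z) = 1 + O(z)$. These two readings agree because the coefficients of $[f(\alpha,z)]^n$ in $z$ are, by construction, the polynomials $q_i(n)$.
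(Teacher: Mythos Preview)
Your proof is correct and follows essentially the same route as the paper's: both start from (\ref{(cumtraceumbral)}), peel off the scalar $m$ to obtain $\tfrac{1}{m}\punt S \equiv \beta\punt C$, and then dot with $\chi$ to undo the $\beta$. The only difference is presentational---you work explicitly with generating functions, while the paper invokes the umbral identity $\chi\punt\beta\equiv u$ directly---and your closing paragraph on the meaning of $\tfrac{1}{m}\punt S$ makes explicit a point the paper leaves implicit.
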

\begin{pf}
Indeed from (\ref{cumtraceumbral}), we have $\frac{1}{m} \punt(\nu
_1 + \cdots+\nu_m) \equiv\beta\punt({\mathfrak c}_{1,\nubs} +
\cdots+ {\mathfrak c}_{m,\nubs})$, so $\chi\punt\frac{1}{m} \punt
(\nu_1 + \cdots+\nu_m) \equiv\chi\punt\beta\punt({\mathfrak
c}_{1,\nubs} + \cdots+ {\mathfrak c}_{m,\nubs})$. The result follows
since $\chi\punt\beta\equiv u$.
\end{pf}
Thanks to Proposition~\ref{revert}, the umbral polynomial ${\mathfrak
c}_{1,\nubs} + \cdots+ {\mathfrak c}_{m,\nubs}$ is similar to an
umbra like $\chi\punt p$, with $p \in R[\A]$.
So it has to satisfy the additivity and homogeneity properties like
those in (\ref{propcum}).
%
%
\begin{theorem}
\textup{Additivity}: If $\nubs_1$ and $\nubs_2$ are uncorrelated
$m$-tuples, then
\[
{\mathfrak c}_{1,\nubs_1 + \nubs_2} + \cdots+ {\mathfrak c}_{m,\nubs
_1 + \nubs_2} \equiv({
\mathfrak c}_{1,\nubs_1} + \cdots+ {\mathfrak c}_{m,\nubs_1}) \dotplus({
\mathfrak c}_{1,\nubs_2} + \cdots+ {\mathfrak c}_{m,\nubs_2});
\]

\textup{Homogeneity}: if $a \in R$, then ${\mathfrak c}_{1,{(a
\nubs)}} + \cdots+ {\mathfrak c}_{m,{(a \nubs)}}
\equiv a ({\mathfrak c}_{1,{\nubs}} + \cdots+ {\mathfrak c}_{m,{\nubs}})$.
\end{theorem}
\begin{pf}
Observe that $a \punt(p + q) \equiv a \punt p + a \punt q$ if $c \in
R$ and $p,q$ are umbral polynomials with disjoint supports\vadjust{\goodbreak}
\cite{Bernoulli}. The previous equivalence holds in particular for
$a=1/m$ and $p=(\nu_{1,1} + \cdots+\nu_{1,m})$ and $q=(\nu_{2,1} +
\cdots+\nu_{2,m})$.
From Proposition~\ref{revert},
\[
{\mathfrak c}_{1,\nubs_1 + \nubs_2} + \cdots+ {\mathfrak c}_{m,
\nubs_1 + \nubs_2} \equiv\chi
\punt\frac{1}{m} \punt(p + q) \equiv\chi\punt\biggl[ \frac{1}{m}
\punt p + \frac{1}{m} \punt q \biggr].
\]
From the additivity property in (\ref{propcum}), we have
\[
\chi\punt\biggl[ \frac{1}{m} \punt p + \frac{1}{m} \punt q \biggr]
\equiv\chi\punt\frac{1}{m} \punt p \dotplus\chi\punt\frac
{1}{m} \punt
q
\]
for $p$ and $q$ umbral polynomials with disjoint supports. The result
follows by observing that ${\mathfrak c}_{1,\nubs} + \cdots+
{\mathfrak c}_{m,\nubs} \equiv
\chi\punt\frac{1}{m} \punt q$ and ${\mathfrak c}_{1,\nubs} +
\cdots+ {\mathfrak c}_{m,\nubs} \equiv
\chi\punt\frac{1}{m} \punt p$.
The homogeneity property follows since $b \punt(a p) \equiv a ( b
\punt p)$, for $a, b \in R$ and $p \in R[\A]$~\cite{Bernoulli}. The
previous equivalence holds
in particular for $a=1/m$ and $p=(\nu_1 + \cdots+\nu_m)$.
\end{pf}
The semi-invariance under translation follows since $({\mathfrak
c}_{1,\ubs} + \cdots+ {\mathfrak c}_{m,\ubs}) \equiv\chi\punt
\frac{1}{m} \punt m \equiv\chi$ whose moments are all zero except
the first. The connection between multivariate cumulants of $\nubs$
and those of $\nu_1 + \cdots+ \nu_m$ is given in the following proposition.
%
%
\begin{prop} We have $\kappabs_{\nubs} \equiv\chi\punt m \punt
\beta\punt{\mathfrak c}_{\nubs}$.
\end{prop}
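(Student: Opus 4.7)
The plan is to splice together the characterization $\kappabs_{\nubs} \equiv \chi \punt \nubs$ from (\ref{(cummomumbral1)}) with the defining equivalence of ${\mathfrak c}_{\nubs}$ given by Definition \ref{firstdef}. Once $\nubs$ on the right is replaced by its expression in terms of ${\mathfrak c}_{\nubs}$, the claim drops out directly.

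First I would rewrite (\ref{(cumtrace1)}) as $\nubs \equiv m \punt \beta \punt {\mathfrak c}_{\nubs}$, noting that the sum of $m$ similar copies of the unity umbra is $m \punt u$ and that by the multivariate composition formula (\ref{(multivariate1)}) both $(m \punt u) \punt \beta \punt {\mathfrak c}_{\nubs}$ and $m \punt \beta \punt {\mathfrak c}_{\nubs}$ share the generating function $\exp\{m(f({\mathfrak c}_{\nubs},\zbs) - 1)\}$. Next I would apply $\chi \punt (\cdot)$ to both sides; this preserves similarity because $f(\chi \punt \xibs,\zbs) = 1 + \log f(\xibs,\zbs)$ depends only on $f(\xibs,\zbs)$. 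Combining with $\kappabs_{\nubs} \equiv \chi \punt \nubs$ yields the stated equivalence $\kappabs_{\nubs} \equiv \chi \punt m \punt \beta \punt {\mathfrak c}_{\nubs}$.

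The cleanest cross-check is at the level of generating functions. By (\ref{(cummomumbral)}) and (\ref{(cumtrace1)}),
\begin{equation*}
f(\kappabs_{\nubs},\zbs) \;=\; 1 + \log f(\nubs,\zbs) \;=\; 1 + m\bigl(f({\mathfrak c}_{\nubs},\zbs) - 1\bigr),
\end{equation*}
while unwinding the nested dot products on the right-hand side gives $f(\chi \punt m \punt \beta \punt {\mathfrak c}_{\nubs},\zbs) = 1 + \log\exp\{m(f({\mathfrak c}_{\nubs},\zbs) - 1)\}$, the same expression.

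There is no substantial obstacle; this is essentially bookkeeping. The only point requiring care is the parsing of the iterated dot-product $\chi \punt m \punt \beta \punt {\mathfrak c}_{\nubs}$: read left-to-right, one first forms the $m$-tuple $m \punt \beta \punt {\mathfrak c}_{\nubs}$ via (\ref{(multivariate1)}), and then the univariate umbra $\chi$ acts on it in the multivariate sense of (\ref{(cummomumbral)}), producing an $m$-tuple, consistently with $\kappabs_{\nubs}$ being an $m$-tuple.
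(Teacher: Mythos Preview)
Your proof is correct and follows essentially the same route as the paper's. Both arguments first identify $\nubs \equiv m \punt \beta \punt {\mathfrak c}_{\nubs}$ from (\ref{(cumtrace1)}) and then pass to $\kappabs_{\nubs}$ by applying $\chi \punt(\cdot)$; the only cosmetic difference is that the paper starts from (\ref{(momcumumbral)}) in the form $u \punt \beta \punt \kappabs_{\nubs} \equiv \nubs$ and simplifies via $\chi \punt u \punt \beta \equiv u$, whereas you invoke the equivalent statement $\kappabs_{\nubs} \equiv \chi \punt \nubs$ from (\ref{(cummomumbral1)}) directly.
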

\begin{pf}
We have $(u + \cdots+ u) \punt\beta\punt{\mathfrak c}_{\nubs}
\equiv m \punt\beta\punt{\mathfrak c}_{\nubs}$,
and from (\ref{cumtrace1}) we have $m \punt\beta\punt{\mathfrak
c}_{\nubs} \equiv\nubs$. Therefore
from (\ref{momcumumbral}), we have $u \punt\beta\punt\kappabs
_{\nubs} \equiv m \punt\beta\punt{\mathfrak c}_{\nubs}$ so that\break
$\chi\punt u \punt\beta\punt\kappabs_{\nubs} \equiv\chi\punt m
\punt\beta\punt{\mathfrak c}_{\nubs}$. The result follows since
$\chi\punt u \punt\beta\equiv u$ and $\chi\punt u \punt\beta\punt
\kappabs_{\nubs} \equiv\kappabs_{\nubs}$.
\end{pf}

\section{Spectral $k$-statistics}\label{sec5}

Tukey~\cite{Tukey1} introduced the multi-index $k$-statistics in connection
with finite-population sampling.
He showed that the multi-index $k$'s are multiplicative in the limit as
$n\to\infty$,
and that they are equal to the product of Fisher's single-index $k$'s.
In the ordinary i.i.d. setting considered by Fisher, this means that
each multi-index $k$ converges to a cumulant product.

We now construct matricial polykays, indexed by an integer partition
$\lambda$, as
unbiased estimators of cumulant products of trace powers of a random
matrix $Y$.
Then, when the random matrix $Y$ is defined by sub-sampling
as in Section~\ref{sec2}, that is, when a spectral sample is
considered, we will prove
the inheritance property by assuming that
the elements of the diagonal matrix $X$ are umbrally represented by
similar and uncorrelated
umbrae.

%
%
\begin{defn}\label{kstat}
The matricial polykay $\kappa_{\lambda}(\ybs)$ of class $\lambda
\vdash i$
is the symmetric polynomial in the eigenvalues $\ybs=(y_1,\ldots, y_m)$
such that
\[
E\bigl[\kappa_{\lambda}(\ybs)\bigr] = \prod_{j=1}^{l(\lambda)}
E\bigl[({\mathfrak c}_{1,\ybs} + \cdots+ {\mathfrak c}_{m,\ybs
})^{\lambda_j}
\bigr].\vadjust{\goodbreak}
\]
\end{defn}
Set $l(\lambda)=r$. From Proposition~\ref{revert} and equation (\ref
{eq15}), with $n$ replaced by the umbra $\chi$, a first expression
of cumulant products of trace powers in terms of moments of $Y$ is
%
%
\begin{eqnarray}\label{matricialpolykays}
&&
\prod_{j=1}^{r} E\bigl[({\mathfrak
c}_{1,\ybs} + \cdots+ {\mathfrak c}_{m,\ybs})^{\lambda_j}\bigr]
\nonumber\\[-8pt]\\[-8pt]
&&\qquad= \sum_{(\eta
_1 \vdash\lambda_1,\ldots,
\eta_r \vdash\lambda_r)} \prod_{j=1}^r
\frac{(-1)^{\nu_{\eta
_j}-1}}{m} d_{{\eta_j}} (\nu_{\eta_j}-1)! g_{\eta_1 + \cdots+
\eta_r},\nonumber
\end{eqnarray}
where $\eta_1 + \cdots+ \eta_r=(t_1, t_2, \ldots)$ is the summation
of the partitions $\{\eta_1,\ldots, \eta_r\}$
and $g_{\eta_1 + \cdots+ \eta_r} = \prod_{j=1}^{l(\eta_1 + \cdots
+ \eta_r)} E[\Tr(Y^{t_j})]$.
Equation (\ref{matricialpolykays}) takes into account that $E[(\chi
\punt\chi)^i]=(-1)^{i-1} (i-1)!$ for all nonnegative integers $i$.

A second expression which is more suitable for spectral sampling is in
terms of joint moments of $Y$, that is,
in terms of products of its trace powers. To this end, we need to work
with permutations
${\mathfrak S}_i$ and with the group algebra $R[\A]({\mathfrak S}_i)$
on the polynomial ring $R[\A]$.

A permutation $\sigma$ of $[i]$, or $\sigma\in{\mathfrak S}_i$, the
symmetric group,
can be decomposed into disjoint cycles $C(\sigma)$.
In the standard representation each cycle is written with its largest
element first,
and the cycles are listed in increasing order of their largest element
\cite{Stanley1}.
The length of the cycle $c \in C(\sigma)$ is its cardinality, denoted
by ${\mathfrak l}(c)$.
The number of cycles of $\sigma$ is denoted by $|C(\sigma)|$. Recall
that a
permutation $\sigma$ with $r_1$ $1$-cycles, $r_2$ $2$-cycles and so on
is said to
be of cycle class $\lambda=(1^{r_1},2^{r_2},\ldots) \vdash i$.
In particular we have $l({\lambda})=|C(\sigma)|$. The number
of permutations $\sigma\in{\mathfrak S}_i$ of cycle class $\lambda
=(1^{r_1}, 2^{r_2}, \ldots)
\vdash i$ is usually denoted by
%
%
\begin{equation}\label{numperpart}
s_{\lambda}= \frac{i!}{1^{r_1} r_1! 2^{r_2} r_2! \cdots}.
\end{equation}
Each cycle class is a conjugacy class of the group of permutations:
two elements of ${\mathfrak S}_i$ are conjugate if and only if they
have the same cycle class.

Consider the group algebra ${\mathbb A}_i=R[\A]({\mathfrak S}_i)$.
An element $f \in{\mathbb A}_i$ associates with each permutation
$\sigma\in{\mathfrak S}_i$ a polynomial $f(\sigma)\in R[\A]$,
so ${\mathbb A}_i$ is the space of $R[\A]$-valued functions.
Multiplication in ${\mathbb A}_i$ is the convolution
%
%
\begin{equation}\label{mult}
(f \cdot g) (\sigma) = \sum_{\rho\omega= \sigma} f(\rho) g(
\omega).
\end{equation}
The unitary element with respect to multiplication
is the indicator function $\delta$ such that $\delta(e) = 1$, with
$e$ the
identity $[i]\to[i]$, and zero otherwise.
Indeed $f \cdot\delta=\delta\cdot f=f$ for $f \in{\mathbb A}_i$.
If it exists, the inverse function of $f$
in ${\mathbb A}_i$ is denoted by $f^{(-1)}$ and is such that
$f^{(-1)} \cdot f = f \cdot f^{(-1)} = \delta$.

Denote by $\mu(Y)$ the function in ${\mathbb A}_i$ such that
%
%
\begin{equation}\label{defmu}
\mu(Y) (\sigma) = \prod_{c \in C(\sigma)} \Tr
\bigl(Y^{{\mathfrak
l}(c)} \bigr) \in R[\A]\vadjust{\goodbreak}
\end{equation}
for a matrix $Y$ of order $m$ and $\sigma\in{\mathfrak S}_i$.
Evidently $\mu(Y)(\sigma)$ is a product of power sums in the
eigenvalues of $Y$,
depending only on the cycle structure. In particular we have
%
%
\begin{equation}\label{ident}
\mu(Y) (e) = \bigl[\Tr(Y)\bigr]^i \quad\mbox{and}\quad\mu(I_m) (
\sigma) = m^{|C(\sigma)|}.
\end{equation}

%
%
\begin{theorem}\label{powsum}
Define the function $\tilde{\kappa}(\ybs) \in{\mathbb A}_i$ by
%
%
\begin{equation}\label{natural}
\tilde{\kappa}(\ybs) = \mu(I_m)^{(-1)}\cdot\mu(Y)
\end{equation}
with $\mu(Y)$ and $\mu(I_m)$ given in (\ref{defmu}) and (\ref{ident}),
respectively.
Then
%
%
\begin{equation}\label{kstat2}
{\mathfrak K}_{\lambda}(\ybs) \simeq\bigl[(1!)^{r_2}
(2!)^{r_3} \cdots\bigr] \tilde{\kappa}(\ybs) (\sigma)
\end{equation}
is a matricial polykay of class $\lambda=(1^{r_1}, 2^{r_2}, \ldots)
\vdash i$, the cycle structure of $\sigma$.
\end{theorem}

\begin{pf}
Observe that by taking the expectation of both sides in (\ref
{natural}), we have
%
%
\begin{equation}\label{Capitaine}
E\bigl[\tilde{\kappa}(\ybs)\bigr] = \mu(I_m)^{-1} \star E
\bigl[\mu(Y)\bigr],
\end{equation}
where $\star$ is the classical convolution on the space of
$R$-functions on ${\mathfrak S}_i$
\[
(a \star b) (\sigma)= \sum_{\rho\omega= \sigma} a(\rho) b(\omega)
\]
with $E[\mu(Y)]\dvtx \sigma\in{\mathfrak S}_i \mapsto E[\mu(Y)(\sigma
)] \in R$ and
$E[\tilde{\kappa}(\ybs)]\dvtx \sigma\in{\mathfrak S}_i \mapsto
E[\tilde{\kappa}(\ybs)(\sigma)] \in R$.
Then the symmetric polynomial ${\mathfrak K}_{\lambda}(\ybs)$ in
(\ref{kstat2}) satisfies
Definition~\ref{kstat} if also the function
\[
E[C_{\ybs}]\dvtx \sigma\in{\mathfrak S}_i \mapsto\prod
_{\sigma\in
C(\sigma)} \frac{1}{({\mathfrak l}(\sigma) -1)!} E\bigl[({\mathfrak
c}_{1,\ybs} + \cdots+ {\mathfrak c}_{m,\ybs})^{{\mathfrak l}(\sigma
)}\bigr]
\]
is such that $E[C_{\ybs}]=\mu(\Id_m)^{-1} \star E[\mu(Y)]$.
From (\ref{cumtraceumbral}), we have
%
%
\begin{equation}\label{momtocum}
E\bigl[\bigl(\Tr(Y)\bigr)^i\bigr]= \sum
_{\lambda\vdash i} d_{\lambda} m^{l(\lambda)} \prod
_{j=1}^{l(\lambda)} E\bigl[({\mathfrak c}_{1,\ybs} +
\cdots+ {\mathfrak c}_{m,\ybs})^{\lambda_i}\bigr].
\end{equation}
From (\ref{momtocum}), by observing that
%
%
\begin{equation}\label{dlambda}
d_{\lambda} = \frac{s_{\lambda}}{(1!)^{r_2} (2!)^{r_3} \cdots}
\end{equation}
with $s_{\lambda}$ the number of permutations $\sigma\in{\mathfrak
S}_i$ of
cycle class $\lambda=(1^{r_1}, 2^{r_2}, \ldots) \vdash i \leq m$
given in (\ref{numperpart}),
we have
%
%
\begin{equation}\label{final}\quad
E\bigl[\mu(Y)\bigr](e) = \sum_{\sigma\in{\mathfrak S}_i}
m^{|C(\sigma)|} E[C_{\ybs}](\sigma) = \sum
_{\sigma\in{\mathfrak S}_i} m^{|C(\sigma)|} E[C_{\ybs}]\bigl(
\sigma^{-1}\bigr),
\end{equation}
where\vspace*{1pt} $\sigma^{-1}$ is the inverse of $\sigma$. By
using the action of ${\mathfrak S}_i$ on the group algebra
$R({\mathfrak S}_i)$ we have
$E[\mu(Y)] = \mu(I_m) \star E[C_{\ybs}]$. For $i \leq m$ the
function $\sigma\mapsto m^{|C(\sigma)|}$
has an inverse~\cite{Letac}, so that $E[C_{\ybs}]=\mu(I_m)^{-1}
\star E[\mu(Y)]$.\vadjust{\goodbreak}
\end{pf}
Note that equation (\ref{Capitaine}) is given in~\cite{Capitaine}
as the definition of the cumulants of a random matrix.
By Theorem~\ref{powsum}, we have shown that $\tilde{\kappa}(\ybs)$
in (\ref{Capitaine})
are rather statistics, due also to the condition $i \leq m$ which
parallels the analogous condition for Fisher's $k$-statistics.

The statistics ${\mathfrak K}_{\lambda}(\ybs)$ are unbiased
estimators of product of cumulants, due to Definition
\ref{kstat}. The inheritance on the average is indeed strictly
connected to the spectral sampling, that is,
to the special structure of the matrix $Y = (H X H^\dag)_{[m \times m]}$.
When ${\mathfrak K}_{\lambda}(\ybs)$ refers to spectral sampling, we
call them spectral polykays.

%
%
\begin{defn}[(Natural spectral statistics)]
A statistic is said to be \textit{natural} relative to spectral sampling
if, for each $m \leq n$, the average value of $T_m$ over spectral
sub-samples $\ybs$ of $\xbs$ is equal to $T_n(\xbs)$,
%
%
\begin{equation}\label{natural1bis}
E \bigl(T_m(\ybs)|\xbs\bigr)=T_n(\xbs).
\end{equation}
\end{defn}

Theorem~\ref{inerit} states that the spectral $k$-statistics
${\mathfrak K}_{\lambda}(\ybs)$ are natural.

We first give a proposition which moves from Lemma 7.2 in \cite
{Bernoulli}. In this proposition, the evaluation operator $E[ \cdot
| \gammabs]$ deals the elements of
the $n$-tuple $\gammabs$ as they were constants. A formal definition of
$E[ \cdot| \gammabs]$ may be found in~\cite{DinardoOliva}.
%
%
\begin{prop} \label{bbb}
If $\{\gamma_1,\gamma_2,\ldots, \gamma_n\}$ are uncorrelated
umbrae similar to
the umbra $\gamma$, then
%
%
\begin{equation}\label{lemma72quater}
E\bigl[(\gamma_1 z_1 + \gamma_2
z_2 + \cdots+ \gamma_n z_n)^j |
\gammabs\bigr] = \sum_{\lambda\vdash j} d_{\lambda}
\kappa_{\lambda}(\gammabs) E[\tilde{\sigma}_{\lambda}],
\end{equation}
where $\gammabs= (\gamma_1, \gamma_2,\ldots, \gamma_n)$ and
$\tilde{\sigma}$ is the polynomial umbra whose moments are the power
sums in the indeterminates $\{z_1,z_2,\ldots, z_n\}$, that is,
$E[\tilde{\sigma}^i]= z_1^i + z_2^i + \cdots+ z_n^i$.
\end{prop}
\begin{pf}
The starting point is the result of Lemma 7.2 in~\cite{Bernoulli},
%
%
\begin{equation}\label{lemma72}
\chi\punt(\gamma_1 z_1 + \cdots+ \gamma_n
z_n) \equiv(\chi\punt\gamma) \tilde{\sigma},
\end{equation}
where $\{\gamma_1,\gamma_2,\ldots, \gamma_n\}$ are uncorrelated
umbrae similar to
$\gamma$. Equivalence (\ref{lemma72}) may be rewritten as
%
%
\begin{equation}\label{lemma72bis}
\chi\punt(\gamma_1 z_1 + \cdots+ \gamma_n
z_n) \equiv\biggl(\chi\punt\frac{1}{n} \punt(
\gamma_1 + \gamma_2 + \cdots+ \gamma_n)
\biggr) \tilde{\sigma}
\end{equation}
as $\gamma\equiv\frac{1}{n} \punt n \punt\gamma$.
Taking the dot-product with $\beta$ on both sides of (\ref
{lemma72bis}) gives
%
%
\begin{equation}\label{lemma72ter}
(\gamma_1 z_1 + \cdots+ \gamma_n
z_n) \equiv\beta\punt\biggl[ \biggl(\chi\punt\frac{1}{n}
\punt(\gamma_1 + \gamma_2 + \cdots+ \gamma_n)
\biggr) \tilde{\sigma} \biggr].
\end{equation}
The result follows by using (\ref{comp}) with $\gamma$ replaced by
the unity
umbra $u$ and by using Proposition~\ref{revert} and Definition \ref
{kstat}. In equation (\ref{comp}), the evaluation operator is intended
to be replaced by $E[ \cdot| \gammabs]$.
\end{pf}

%
%
\begin{theorem}[(Inheritance on the average)] \label{inerit}
The statistics ${\mathfrak K}_{\lambda}(\ybs)$ in (\ref{kstat2})
are inherited on the average, that is,
\[
E\bigl[{\mathfrak K}_{\lambda}(\ybs)|\xbs\bigr]={\mathfrak
K}_{\lambda}(\xbs),
\]
where $\ybs$ is a spectral random sample.
\end{theorem}
\begin{pf}
Since the trace is invariant under cyclic permutations, for a
nonnegative integer $i$, we have
\[
\Tr\bigl(Y^i\bigr) = \Tr\bigl[\bigl(H X H^\dag
\bigr)_{[m \times m]}^{i} \bigr] = \Tr\bigl[\bigl(X
H_{[m \times n]}^{\dag} H_{[m \times n]}\bigr)^{i} \bigr]
\]
with\vspace*{1pt} $Y$ given in Definition~\ref{Def2}. Therefore $\mu(Y) = \mu(X
B)$, with $B = H_{[m \times n]}^{\dag}$ $H_{[m \times n]}$ a square
matrix of dimension $n$ independent of $X$. The random matrix $B$ is an
orthogonal projection on a $m$-dimensional subspace such that
%
%
\begin{eqnarray}\label{orth}
\mu(B) (\sigma) & = & \prod_{c \in C(\sigma)} \Tr\bigl[
\bigl(H_{[m
\times n]}^{\dag} H_{[m \times n]}\bigr)^{l(c)}
\bigr]
\nonumber\\[-8pt]\\[-8pt]
& = & \prod_{c \in C(\sigma)} \Tr\bigl[\bigl(H_{[m \times n]}
H_{[m
\times n]}^{\dag}\bigr)^{l(c)} \bigr] =
\mu(I_m) (\sigma).
\nonumber
\end{eqnarray}
For a diagonal matrix $X$ independent of $B$, and by using Proposition
\ref{bbb}, we have
%
%
\begin{equation}\label{fund1}
E\bigl[\mu(X B) | \xbs\bigr] = \mu(I_m) \cdot\tilde{\kappa}(\xbs).
\end{equation}
Indeed, if in Proposition~\ref{bbb} the umbrae $\{\gamma_1,\ldots,
\gamma_n\}$ are replaced by the elements of the diagonal matrix $X$,
and the indeterminates $\{z_1,\ldots, z_n\}$ by the diagonal entries
of the matrix $B$, equation (\ref{lemma72quater}) may be updated as
%
%
\begin{equation}\label{lemma72ter}
E\bigl[\Tr(X B)^i | \xbs\bigr] = \sum_{\lambda\vdash i}
d_{\lambda} \kappa_{\lambda}(\xbs) E[ \tilde{\sigma}_{\lambda}].
\end{equation}
Due to (\ref{orth}), we have $E[ \tilde{\sigma}_{\lambda}] =
m^{l({\lambda})}$.
So again equation (\ref{fund1}) follows by using the action of
${\mathfrak S}_i$ on the group algebra $R({\mathfrak S}_i)$.
The result follows from Theorem~\ref{powsum} by observing that
\begin{eqnarray*}
E\bigl[\tilde{\kappa}(\ybs)| \xbs\bigr] &=& E\bigl[\mu(I_m)^{-1}
\cdot\mu(XB) | \xbs\bigr] \\
&=& E\bigl[\mu(I_m)^{-1}\cdot
\mu(I_m) \cdot\tilde{\kappa}(\xbs)| \xbs\bigr]=\tilde{\kappa}(\xbs).
\end{eqnarray*}
\upqed\end{pf}
%
%
\begin{rem}
The computation of $\mu(I_m)^{-1}$ requires the solution of a
system of
$m$ equations in $m$ indeterminates
$\mu(I_m) \cdot\mu(I_m)^{-1} = \mu(I_m)^{-1} \cdot\mu(I_m) =
\delta$ with coefficients
given by $\mu(I_m)$.
This task may be performed with standard procedures in any symbolic package.
A different way consists of resorting to the so-called Weingarten
function on ${\mathfrak S}_i$.
See~\cite{Collins} for the definition and the properties of the
Weingarten function,
which involves the characters of ${\mathfrak S}_i$
and Schur symmetric polynomials indexed by $\lambda\vdash i$.
\end{rem}

The spectral $k$-statistics can be expressed on terms of power sums
$S_r = \sum_{j=1}^n x_j^r$ as follows:
\begin{eqnarray*}
{\mathfrak K}_{(1)} &=& \frac{S_1}{n}= k_{(1)},
\\[-3pt]
{\mathfrak K}_{(2)} &=& \frac{n S_2-S_1^2}{ n ( n^2-1 )} =
\frac{k_{(2)}}{(n+1)},
\\[-3pt]
{\mathfrak K}_{(1^2)} &=& \frac{n S_1^{2} - S_2}{ n ( n^2-1
) } = \frac{k_{(1^2)}}{(n+1)},
\\[-3pt]
{\mathfrak K}_{(3)} &=& 2 \frac{2 S_1^{3} - 3 n S_1 S_2
+ n^{2} S_3}{ n
( n^2-1 ) ( n^2-4 )} = \frac{2 k_{(3)}} {
(n+1)(n+2)},
\\[-3pt]
{\mathfrak K}_{(1,2)} &=& \frac{-2 n S_3 + (n^{2} + 2) S_1
S_2 - n S_1^3}{n
( n^2-1 ) ( n^2-4 )} = \frac{2 k_{(1,2)} - n
k_{(1)} k_{(2)}} {(n+1)(n+2)},
\\[-3pt]
{\mathfrak K}_{(1^3)} &=& \frac{ S_1^3 (n^2 - 2) - 3 n S_1
S_2 + 4 S_3}{n
( n^2-1 ) ( n^2-4 )} = \frac{2 k_{(1^3)} - 3
k_{(1)} k_{(2)} + n(n+3)(k_{(1)})^3}{(n+1)(n+2)}.
\end{eqnarray*}
The functions of degree $4$ are a little more complicated,
\begin{eqnarray*}
{\mathfrak K}_{(4)} &=& 6 \frac{ S_4 (n^3 + n) - 4 S_1 S_3
(n^2+1) + S_2^2 (3 - 2 n^2) + 10
n S_1^2 S_2 - 5 S_1^4}{n^2 ( n^2-1 ) ( n^2-4
) ( n^2-9 )},
\\[-3pt]
{\mathfrak K}_{(1,3)} &=& 2 \frac{ - 3 n S_4 (n^2 + 1) + S_1 S_3
(12 + 3 n^2 + n^4) + S_2^2 (6 n^2
- 9)}{n^2 ( n^2-1 ) ( n^2-4 ) ( n^2-9
)}
\\[-3pt]
&&{} + \frac{ - 3 n S_1^2 S_2 (n^2 + 1) + 2 (2 n^2 - 3)
S_1^4}{n^2 ( n^2-1 ) ( n^2-4 ) ( n^2-9
)},
\\[-3pt]
{\mathfrak K}_{(2^2)} &=& \frac{ 2 S_4 (3 n - 2 n^3) + 4
S_1 S_3 (4 n^2 - 6) + S_2^2 (18
+ n^4 - 6 n^2)}{n^2 ( n^2-1 ) ( n^2-4 ) (
n^2-9 )}
\\[-3pt]
&&{} + \frac{- 2 n S_1^2 S_2 (n^2 + 6) + (n^2 + 6) S_1^4}{n^2
( n^2-1 ) ( n^2-4 ) ( n^2-9 )},
\\[-3pt]
{\mathfrak K}_{(1^2,2)} &=& \frac{ 10 n S_4 -4 S_1 S_3 (n^2
+1) + S_2^2 (n^2 + 6)
+ n S_1^2 S_2 (n^2 + 1)}{n ( n^2-1 ) ( n^2-4
) (
n^2-9 )}
\\[-3pt]
&&{} + \frac{(4 - n^2) S_1^4}{n ( n^2-1 ) ( n^2-4
) (
n^2-9 )},
\\[-3pt]
{\mathfrak K}_{(1^4)} &=& \frac{ - 30 n S_4 + 4 S_1 S_3 (4
n^2 - 6) + S_2^2 (3 n^2 + 18) + 6
n S_1^2 S_2 (4 - n^2)}{n^2 ( n^2-1 ) ( n^2-4
) ( n^2-9 )}
\\[-3pt]
&&{} + \frac{(6 - 8 n^2 + n^4) S_1^4}{n^2 ( n^2-1 )
( n^2-4 ) ( n^2-9 )}.
\end{eqnarray*}
For comparison purposes, all of the single-index functions $\mfk
_{(r)}(\xbs)$
and the $k$-statistics $k_{(r)}(\xbs)$ for $r \ge2$ are invariant
under translation:
$\mfk_{(r)}(\xbs-\bar{\xbs}) = \mfk_{(r)}(\xbs)$.
If the mean is zero, the fourth-order statistics are
\begin{eqnarray*}
(n)_4 k_{(4)} &=& n^2(n+1)S_4 -
3n(n-1)S_2^2,
\\[-3pt]
\mfk_{(4)} &\propto& n\bigl(n^2+1\bigr) S_4 -
\bigl(2n^2-3\bigr) S_2^2,
\end{eqnarray*}
showing that $\mfk_{(4)}$ is not a simple multiple of $k_{(4)}$.

For a spectral sample, the first few conditional variances and
covariances are
\begin{eqnarray*}\hspace*{-5pt}
\var\bigl(\mfk_{(1)}(\ybs)\given\xbs\bigr) &=& \mfk_{(2)}(\xbs) \biggl(
\frac1m - \frac1n \biggr),
\\[-3pt]\hspace*{-5pt}
\cov\bigl(\mfk_{(1)}(\ybs), \mfk_{(2)}(\ybs) \given\xbs\bigr)
&=& 2\mfk_{(3)}(\xbs) \biggl(\frac1m - \frac1n \biggr),
\\[-3pt]\hspace*{-5pt}
\var\bigl(\mfk_{(2)}(\ybs)\given\xbs\bigr) &=& 2\mfk_{(2^2)}(\xbs)
\biggl(\frac1{m^2-1} - \frac1{n^2-1}
\biggr)
\\[-3pt]\hspace*{-5pt}
&&{} + 2\mfk_{(4)}(\xbs) \frac{(n-m)(2m^2n^2 -
3n^2-3m^2-mn+3)}{nm(m^2-1)(n^2-1)},
\end{eqnarray*}
which are similar to the covariances of the corresponding $k$-statistics.

We now characterize the limiting behavior of spectral polykays.
To this end, we recall the notion of \textit{free cumulant} occurring within
noncommutative probability theory~\cite{Speicher}.
A noncommutative probability space is a pair $({\mathit A},\Phi)$,
where ${\mathit A}$ is a unital noncommutative algebra, and $\Phi\dvtx
{\mathit A} \rightarrow{\mathbb C}$ is a unital linear functional.
This gives rise to a sequence of multilinear functional $\{\Phi_i\}$
on ${\mathit A}$ via $\Phi_i(a_1,\ldots, a_i)=\Phi(a_1 \cdots a_i)$.

Let ${\mathcal NC}$ denote the lattice of all noncrossing partitions of
$[i]$. A noncrossing partition $\pi= \{B_1,B_2,\ldots,B_k\}$ of the
set $[i]$ is a partition such that if $1 \leq h < l < s < k \leq i$,
with $h, s \in B_n$ and $l,k \in B_{n^{ \prime}}$, then $n =
n^{\prime}$. For any noncrossing partition $\pi$ and $a_1,\ldots, a_i
\in{\mathit A}$ we set
\[
\Phi_{\pi}(a_1,\ldots, a_i)=\prod
_{B \in\pi} \Phi(a_{j_1} \cdots a_{j_s})\vspace*{-3pt}
\]
for $B=(j_1 < \cdots< j_s)$. Free cumulants are defined as multilinear
functionals such that
\[
c_{\pi}(a_1,\ldots, a_i)=\prod
_{B \in\pi} c_{|B|}(a_{j_1} \cdots
a_{j_s})\vspace*{-3pt}
\]
and\vspace*{-3pt}
\[
c_i(a_1,\ldots, a_i)=\sum
_{\pi\in{\mathcal NC}} {\mathfrak m}(\pi, 1_i)
\Phi_{\pi}(a_1,\ldots, a_i),
\]
where ${\mathfrak m}(\pi, 1_i)$ is the Moebius function on the lattice
of noncrossing partitions~\cite{Speicher}. The $i$th cumulants of $a$ is
$c_i=c_i(a,\ldots, a)$.

By using Proposition 6.1 in~\cite{Capitaine}, when $m$ goes to
infinity, the mean of the normalized spectral $k$-statistics $\tilde
{\kappa}_{\lambda}^{(N)}$ corresponding to $\lambda=(1^{r_1},
2^{r_2}, \ldots) \vdash i$
\[
\tilde{\kappa}_{\lambda}^{(N)}(\ybs):= m^{i-l({\lambda})} \tilde{
\kappa}_{\lambda}(\ybs)\vadjust{\goodbreak}
\]
tends toward the product of free cumulants $c_1^{r_1} c_2^{r_2} \cdots$
with $\tilde{\kappa}_{\lambda}(\ybs):=\tilde{\kappa}(\ybs
)(\sigma)$, given in
(\ref{natural}), and $\sigma$ a permutation of class $\lambda$.

\section{Generalized spectral polykays}\label{sec6}

The notion of generalized cumulant has been discussed by McCullagh
\cite{McCullagh} and involves set partitions. In umbral terms, if $\pi$
is a partition of $\{\mu_1, \mu_2,\ldots, \mu_i\}$, then the
generalized cumulant $\kappa_{\pi}$ is defined as~\cite{Bernoulli}
\[
E\bigl[(\chi\punt\mu)_{\pi}\bigr]=\kappa_{\pi} \qquad\mbox{with }
(\chi\punt\mu)_{\pi}=\prod_{B \in\Pi_i} (\chi\punt
\mu_{B}) \mbox{ and } \mu_B=\prod
_{j \in B} \mu_j.
\]
For example, if $i=5$ and $\pi=\{\{\mu_1, \mu_2\},\{\mu_3\},\{\mu
_4,\mu_5\}\}$, then
\[
E\bigl[(\chi\punt\mu)_{\pi}\bigr] = E\bigl[(\chi\punt
\mu_1 \mu_2) (\chi\punt\mu_3) (\chi\punt
\mu_4 \mu_5)\bigr] = \kappa^{1 2, 3, 4 5}
\]
using McCullagh's notation.
Generalized $k$-statistics are the sample version of the generalized
cumulants. The importance of generalized $k$-statistics stems from the
following properties: the generalized $k$-statistics are linearly
independent; every polynomial symmetric function can be expressed
uniquely as a linear combination of generalized $k$-statistics; any
polynomial symmetric function whose expectation is independent of $n$
can be expressed as linear combination of
generalized $k$-statistics with coefficients independent of $n$ \cite
{McCullagh1}. Due to the last property,
natural statistics could be expressed as linear combinations of their
generalized $k$-statistics with coefficients independent of $n$.

%
%
\begin{theorem} \label{polyk}
If $\lambda\vdash i \leq m$ and $\pi$ is a set partition of class
$\lambda$, then generalized $k$-statistics
of spectral polykays are
%
%
\begin{equation}\label{defpoll}
\tilde{l}_{\pi}(\ybs) \simeq\sum_{\tau\geq\pi}
(-1)^{|\tau|-1} \bigl(|\tau|-1\bigr)! \tilde{\kappa}_{\tau}(\ybs),
\end{equation}
where $\tilde{\kappa}_{\tau}(\ybs)$ denotes the function on $\Pi
_i$ such that
$\tilde{\kappa}(\ybs)(\tau):= \tilde{\kappa}(\ybs)(\sigma)$,
with $\sigma\in{\mathfrak S}_i$ a permutation of the same class of
$\tau$
and $\tilde{\kappa}(\ybs)$ given in (\ref{natural}).
\end{theorem}
The proof of Theorem~\ref{polyk} relies on Proposition 5.4 of \cite
{Bernoulli}. We do not invert
equivalence (\ref{defpoll}) because the linear combination giving
spectral polykays in terms of their generalized $k$-statistics is quite
cumbersome; see equation (3.18) in~\cite{McCullagh1}. Instead, there
are alternative systems of symmetric functions that are more suitable from
a computational point of view. All such systems are invertible linear
functions of generalized $k$-statistics
with coefficients independent of the sample size, and the properties
given above are preserved under such transformations.

To characterize such coefficients, we first recall the Moebius
inversion formula on the lattice of set partitions~\cite{Rota}. The
set $\Pi_i$ with the refinement order $\leq$ is a lattice, where $\pi
\leq\tau$ if for any block\vadjust{\goodbreak}
in $B \in\pi$ there exists a block $B^{\prime} \in\tau$ such that
$B \subseteq B^{\prime}$. If $G$ is a function on $\Pi_i$ and
\[
F(\pi)=\sum_{\tau\geq\pi} G(\tau),
\]
then the Moebius inversion formula states that
%
%
\begin{equation}\label{moebius}
G(\pi)=\sum_{\tau\geq\pi} {\mathfrak m}(\pi, \tau) F(\tau),
\end{equation}
where ${\mathfrak m}(\pi, \tau)$ is the so-called Moebius function.
It is shown that
\[
{\mathfrak m}(\pi, \tau) = (-1)^{s-t} (2!)^{r_3}
(3!)^{r_4} \cdots,
\]
where $r_1+ 2 r_2 + \cdots= s = |\pi|$, $r_1+r_2+\cdots= t = |\tau
|$ and $(1^{r_1}, 2^{r_2}, \ldots)$ is the partition, usually denoted
by $\lambda(\pi, \tau)$,
of the integer $s$ such that $r_j$ blocks of $\tau$ contain exactly
$j$ blocks of $\pi$.
%
%
\begin{defn} \label{polyk1}
If $\lambda\vdash i \leq m$ and $\pi$ is a set partition of class
$\lambda$, the (transformed) generalized $k$-statistics of spectral
polykays are $l_{\tau}(\ybs)$ such that
%
%
\begin{equation}\label{kstatpolyk}
\tilde{\kappa}_{\pi}(\ybs) = \sum_{\tau\geq\pi}
l_{\tau}(\ybs),
\end{equation}
where $\tilde{\kappa}_{\tau}(\ybs)$ denotes the function on $\Pi
_i$ such that
$\tilde{\kappa}(\ybs)(\tau):= \tilde{\kappa}(\ybs)(\sigma)$,
with $\sigma\in{\mathfrak S}_i$ a permutation of the same class of
$\tau$ and $\tilde{\kappa}(\ybs)$ given in (\ref{natural}).
\end{defn}
The linear combination in (\ref{kstatpolyk}) is very simple
involving coefficients all equal to~$1$.
By using the Moebius inversion formula (\ref{moebius}), from (\ref
{kstatpolyk}) we have\looseness=-1
%
%
\begin{equation}\label{defpol}
l_{\pi}(\ybs)=\sum_{\tau\geq\pi} {\mathfrak m}(
\pi,\tau) \tilde{\kappa}_{\tau}(\ybs).
\end{equation}\looseness=0

Since $\tilde{\kappa}(\ybs)(\sigma)$ depends only on the cycle
structure $C(\sigma)$,
then $\tilde{\kappa}_{\tau}(\ybs)$ depends only on the block sizes
in $\tau$.
So in the sum (\ref{defpol}), there are $d_{\lambda}$ spectral
$k$-statistics equal
to $\tilde{\kappa}_{\tau}(\ybs)$, all those having the same class
$\lambda$.
Therefore spectral polykays of degree $i$ can be indexed by partitions
of $i$.
As example, by Definition~\ref{polyk}, the spectral polykays up to
order $4$ are
\begin{eqnarray*}
l_{(1)} & = & \tilde{\kappa}_{(1)}\qquad(i=1),
\\
l_{(1^2)} & = & \tilde{\kappa}_{(1^2)} - \tilde{
\kappa}_{(2)} \qquad(i=2),
\\
l_{(1,2)} & = & \tilde{\kappa}_{(1,2)} - \tilde{
\kappa}_{(3)} \qquad(i=3),
\\
l_{(1^3)} & = & \tilde{\kappa}_{(1^3)} - 3 \tilde{
\kappa}_{(1,2)} + 2 \tilde{\kappa}_{(3)},
\\
l_{(1,3)} & = & \tilde{\kappa}_{(1,3)} - \tilde{
\kappa}_{(4)} \qquad(i=4),
\\
l_{(2^2)} & = & \tilde{\kappa}_{(2^2)} - \tilde{
\kappa}_{(4)},
\\
l_{(1^2,2)} & = & \tilde{\kappa}_{(1^2,2)} - 2 \tilde{\kappa
}_{(1,3)} - \tilde{\kappa}_{(2^2)} + 2 \tilde{
\kappa}_{(4)},
\\
l_{(1^4)} & = & \tilde{\kappa}_{(1^4)} - 6 \tilde{
\kappa}_{(1^2,2)} + 8 \tilde{\kappa}_{(1,3)} + 3 \tilde{
\kappa}_{(2^2)} - 6 \tilde{\kappa}_{(4)}.
\end{eqnarray*}

In addition, we have $l_{(i)} = \tilde{\kappa}_{(i)}$.
We take a moment to motivate this definition.
Tukey~\cite{Tukey1} gives very similar equations connecting
classical polykays and $k$-statistics. We just recall those up to order $4$.
\begin{eqnarray*}
k_{(1)} & = & k_{(1)} \qquad(i=1),
\\
k_{(1^2)} & = & k_{(1)} k_{(1)} - {\frac{1}{m}}
k_{(2)} \qquad(i=2),
\\
k_{(1,2)} & = & k_{(1)} k_{(2)} - {\frac{1}{m}}
k_{(3)} \qquad(i=3),
\\
k_{(1^3)} & = & k_{(1)} k_{(1)} k_{(1)} - {
\frac{3}{m}} k_{(2)} k_{(1)} + {\frac{2}{m^2}}
k_{(3)},
\\
k_{(1,3)} & = & k_{(1)} k_{(3)} - {\frac{1}{m}}
k_{(4)} \qquad(i=4),
\\
k_{(2^2)} & = & {\frac{m-1}{m+1}} k_{(2)} k_{(2)} -
{\frac{1}{m}} k_{(4)},
\\
k_{(1^2,2)} & = & k_{(2)} k_{(1)} k_{(1)} - {
\frac
{2}{m}} k_{(3)} k_{(1)} - {\frac{m-1}{m(m+1)}}
k_{(2)} k_{(2)} + {\frac{2}{m(m+1)}} k_{(4)},
\\
k_{(1^4)} & = & k_{(1)} k_{(1)} k_{(1)}
k_{(1)} - {\frac
{6}{m}} k_{(2)} k_{(1)}
k_{(1)} + {\frac{8}{m^2}} k_{(3)} k_{(1)}\\[-3pt]
&&{}  + {
\frac{3(m-1)}{m^2(m+1)}} k_{(2)} k_{(2)}
- {\frac{6m}{m+1}} k_{(4)}.
\end{eqnarray*}

The two sets of equations are very similar in structure.

The refinement order in (\ref{kstatpolyk}) is inverted with respect
to those
connecting moments and cumulants~\cite{McCullagh}.
It is the same as that employed in the change of basis between augmented
symmetric functions and power sums~\cite{Bernoulli} and employed by
Tukey in order to show the multiplicative structure of $\tilde
{{\mathfrak a}}_{\lambda}(\xbs)$ for infinite populations.

In terms of power sums in the eigenvalues, the transformed generalized
spectral polykays up to order $4$ are
\begin{eqnarray*}
l_{(1, 2)} & = & \frac{(m+1) S_1 S_2 - S_1^3 - m
S_3}{m(m-1)(m+1)(m-2)},
\\
l_{(1^2, 2)} & = & \frac{2 m S_4 + (m+3) S_1^2 S_2 - (2m + 2) S_1
S_3 - m S_2^2 - S_1^4}{m(m-1)(m+1)(m-2)(m-3)},
\\
l_{(2^2)} & = & \frac{1}{m^2 ( m^2-1 ) ( m-2
) ( m-3 )} \\
&&{}\times\bigl\{ {S_{{1}}}^{4}+
\bigl( -3 m+3+{m}^{2} \bigr) {S_{{2}}}^{2}
\\
&&\hspace*{16.3pt}{} + ( 4 m-4 ) S_{{1}}S_{{3}} - 2 {S_{{1}}}^{2}
S_{{2}} m + \bigl( -{m}^{2}+m \bigr) S_{{4}} \bigr
\},
\\
l_{(1, 3)} & = & \frac{2}{{( m^2-4)(m^2-1) {m}^{2}}} \\[-3pt]
&&{}\times\bigl\{ -S_{{4}} m
\bigl(m^2 + 1\bigr) + S_{{1}} S_{{3}}
\bigl({m}^{3} +m^2 + 4\bigr)
\\
&&\hspace*{16.3pt}{} + S_2^2 \bigl(2 m^2 - 3\bigr) - m
S_1^2 S_2 (3 m + 1) + S_{{1}}^{4}
( 2 m - 1) \bigr\}.
\end{eqnarray*}
The spectral statistics $l_{(1^r)}$ are the same as the corresponding
polykays $k_{(1^r)}$.

%
\begin{theorem}
When $m$ goes to infinity the mean of the normalized (transformed)
generalized $k$-statistics
\[
l^{(N)}_{\pi}(\ybs):= m^{i-|\pi|} l_{\pi}(
\ybs)
\]
tends to $d_{\lambda} c_1^{r_1} c_2^{r_2} \cdots\,$, with
$\lambda=(1^{r_1}, 2^{r_2}, \ldots) \vdash i$, the class partition of
$\pi$, and $\{c_j\}$ free cumulants.
\end{theorem}
\begin{pf}
After multiplying both sides of (\ref{defpol}) by $m^{i-|\pi|}$, we have
\[
m^{i-|\pi|} l_{\pi}(\ybs) = \sum_{\tau\geq\pi}
{\mathfrak m}(\pi,\tau) m^{i-|\pi|} \tilde{\kappa}_{\tau}(\ybs).
\]
Since $\tau\geq\pi$, then $|\tau| \leq|\pi|$ so that
\[
l^{(N)}_{\pi}(\ybs) = \sum_{\tau\geq\pi}
{\mathfrak m}(\pi,\tau) \frac{1}{m^{|\pi|-|\tau|}} m^{i-|\tau|} \tilde
{\kappa
}_{\tau}(\xbs).
\]
As $m$ goes to infinity, for all $\tau> \pi$ having the same class
partition, $m^{i-|\tau|} \tilde{\kappa}_{\tau}(\ybs)$ tends toward
the free cumulant $c_{\tau}$, whereas $\frac{1}{m^{|\pi|-|\tau|}}$
goes to zero. The result\vspace*{1pt} follows since for $\tau= \pi$ we have
${\mathfrak m}(\pi,\tau)=1$, and for all
$\pi$ having the same class partition $m^{i-|\pi|} \tilde{\kappa
}_{\pi}(\ybs)$ goes to $c_{\pi}=c_1^{r_1} c_2^{r_2} \cdots\,$.
\end{pf}



%

\printaddresses

\end{document}